\author{Richard Gottesman}
\address{Department of Mathematics and Statistics\\ Queen's University\\ Kingston, ON K7L3N6 }
\email{richard.b.gottesman@gmail.com}
\title{The module of vector-valued modular forms is Cohen-Macaulay}
\newcommand{\nin}{\noindent}
\newcommand{\Z}{\textbf{Z}}
\newcommand{\Q}{\textbf{Q}}
\newcommand{\C}{\textbf{C}}
\newcommand{\Ind}{\textrm{Ind}}
\declaretheorem[name=Theorem, numberwithin=section]{thm}
\newtheorem{cor}[thm]{Corollary}
\newtheorem{lemma}[thm]{Lemma}
\newtheorem{prop}[thm]{Proposition}
\newtheorem{defi}[thm]{Definition}
\def\moverlay{\mathpalette\mov@rlay}
\def\mov@rlay#1#2{\leavevmode\vtop{%
   \baselineskip\z@skip \lineskiplimit-\maxdimen
   \ialign{\hfil$\m@th#1##$\hfil\cr#2\crcr}}}
\newcommand{\charfusion}[3][\mathord]{
    #1{\ifx#1\mathop\vphantom{#2}\fi
        \mathpalette\mov@rlay{#2\cr#3}
      }
    \ifx#1\mathop\expandafter\displaylimits\fi}
\begin{document}

\begin{abstract} Let $H$ denote a finite index subgroup of the modular group $\Gamma$ and let $\rho$ denote a finite-dimensional complex representation of $H.$ Let $M(\rho)$ denote the collection of holomorphic vector-valued modular forms for $\rho$ and let $M(H)$ denote the collection of modular forms on $H$. Then $M(\rho)$ is a $\Z$-graded $M(H)$-module. It has 
been proven that $M(\rho)$ may not be projective as a $M(H)$-module. We prove that $M(\rho)$ is Cohen-Macaulay as a $M(H)$-module. We also explain how to apply this result to prove that if $M(H)$ is a polynomial ring then $M(\rho)$ is a free $M(H)$-module of rank $\textrm{dim } \rho.$ \end{abstract}
\maketitle


\section{Introduction} 
\nin Let $H$ denote a finite index subgroup of the modular group $\Gamma := \textrm{SL}_{2}(\Z)$ and let $\rho$ denote a finite-dimensional complex representation of $H.$ Let $k \in \Z$ and let $\mathfrak{H}$ denote the complex upper half plane.
If $F: \mathfrak{H} \rightarrow \C^{t}$ is a holomorphic function and if $\gamma =  \left[ {\begin{array}{cc}
   a & b \\
   c & d \\
\end{array} } \right] \in \Gamma $ then we define $F|_{k} \gamma$ by setting
 $F|_{k}  \gamma (\tau) :=  (c \tau + d)^{-k} F\left(\frac{a \tau + b}{c \tau + d}\right).$
 \begin{defi} A \textbf{vector-valued modular form }$F$ of weight $k$ with respect to $\rho$ is a holomorphic function $F: \mathfrak{H} \rightarrow \C^{\textrm{dim } \rho}$ which is also holomorphic at all of the cusps of \newline 
$H \backslash (\mathfrak{H} \bigcup \mathbb{P}^{1}(\Q))$ and such that for all  $\gamma \in H$, 
\begin{align}
F|_{k} \gamma =  \rho(\gamma)F. 
\end{align}
 \end{defi}

\nin The statement that $F$ is holomorphic at all of the cusps of $H \backslash (\mathfrak{H} \bigcup \mathbb{P}^{1}(\Q))$  is equivalent to the statement that for each $\gamma \in \Gamma$, each of the component functions of $F|_{k} \gamma$ has a holomorphic $q$-expansion. The notion of a holomorphic $q$-expansion is a bit more intricate in the vector-valued setting. A precise treatment of the notion of a holomorphic $q$-expansion can be found for $\Gamma$ in \cite{survey} and for an arbitrary subgroup in \cite{gottesman}.\\

\nin The collection of all vector-valued modular forms of weight $k$ for the representation $\rho$ form a finite-dimensional complex vector-space, which we denote by $M_{k}(\rho)$. We let $M_{t}(H)$ denote the collection of all modular forms of weight $t$ on $H.$  We define $M(\rho) := \bigoplus_{k \in \Z} M_{k}(\rho)$ and $M(H) := \bigoplus_{k \in \Z} M_{k}(H).$ If $F \in M_{k}(\rho)$ and if $m \in M_{t}(H)$ then $mF \in M_{k+t}(\rho).$ In this way, we view $M(\rho)$ as a 
$\Z$-graded $M(H)$-module. If $\rho$ is a representation of $\Gamma$ then the module structure of $M(\rho)$ is especially pleasing. 

\begin{thm} \label{MarksMason} Let $\rho$ denote a representation of $\Gamma$. Then $M(\rho)$ is a free 
$M(\Gamma)$-module of rank equal to the dimension of $\rho$. 
\end{thm}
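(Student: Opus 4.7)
The plan is to exploit the classical identification $M(\Gamma) = \C[E_4, E_6]$, which makes $M(\Gamma)$ a graded polynomial ring of Krull dimension $2$, and then to show that $M(\rho)$ is a finitely generated Cohen-Macaulay module over this polynomial ring, from which freeness follows automatically. First I would establish finite generation: even when $\rho$ has infinite image, a standard linear growth bound $\dim M_k(\rho) = O(k)$ obtained via a modular-derivative argument together with the finite-dimensionality of each $M_k(\rho)$ produces generators concentrated in finitely many weights, so $M(\rho)$ is a finitely generated graded $M(\Gamma)$-module.

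The heart of the proof is to show that $(E_4, E_6)$ forms a regular sequence on $M(\rho)$. That $E_4$ is a nonzerodivisor is immediate, since it acts by multiplication by a nowhere-identically-zero holomorphic function on the holomorphic components of any $F \in M(\rho)$. For the second step, suppose $E_6 F = E_4 G$ with $F, G \in M(\rho)$; then the meromorphic vector-valued function $H := F/E_4 = G/E_6$ has potential poles only on the union of the zero divisors of $E_4$ and $E_6$. But $E_4$ vanishes to first order exactly along the $\Gamma$-orbit of $e^{2\pi i/3}$ and $E_6$ vanishes to first order exactly along the $\Gamma$-orbit of $i$, and these two orbits are disjoint in $\mathfrak{H}$. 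At each candidate pole one of the two presentations of $H$ is visibly holomorphic, so $H$ is holomorphic on all of $\mathfrak{H}$; holomorphy at the cusps follows because $E_4$ has a nonzero constant term in every cuspidal $q$-expansion. Hence $H \in M(\rho)$ and $F \in E_4 M(\rho)$, completing the verification that $M(\rho)$ is Cohen-Macaulay.

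Once Cohen-Macaulayness is established, the graded Auslander-Buchsbaum formula over the regular ring $M(\Gamma)$ gives $\textrm{pd}_{M(\Gamma)}\, M(\rho) = \dim M(\Gamma) - \textrm{depth}\, M(\rho) = 2 - 2 = 0$, and a finitely generated graded projective module over $\C[E_4, E_6]$ is free by the graded Nakayama lemma. For the rank, a free module $\bigoplus_i M(\Gamma)(-k_i)$ has graded pieces whose dimensions grow as $(\text{number of summands}) \cdot k/12 + O(1)$, so matching against the Riemann-Roch-type estimate $\dim M_k(\rho) = (\dim \rho)\, k/12 + O(1)$ on the orbifold modular curve $\Gamma \backslash \mathfrak{H}^*$ identifies the rank with $\dim \rho$.

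The most delicate step is the regular-sequence verification, which really uses the specific geometry of the zeros of $E_4$ and $E_6$ on $\mathfrak{H}$ together with their nonvanishing at cusps; the finite-generation and rank computations, while nontrivial, reduce to standard growth estimates on the modular curve once the commutative-algebra framework is in place.
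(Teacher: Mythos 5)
You should first note that the paper does not actually prove this statement: Theorem \ref{MarksMason} is quoted as a known result, with proofs attributed to Marks--Mason, Gannon, and Candelori--Franc. So there is no internal proof to compare against; what you have written is closest in spirit to the depth/regular-sequence strategy the paper uses for its own main theorem (your $(E_4,E_6)$ computation runs the same ``divide and check holomorphy at the interior zeros and at the cusp'' argument as the paper's lemma showing that $\Delta, E_4$ is a regular sequence for $M(\rho)$). That part of your argument is correct: the zero loci of $E_4$ and $E_6$ in $\mathfrak{H}$ are the disjoint orbits $\Gamma e^{2\pi i/3}$ and $\Gamma i$, both forms are nonzero at the cusp, and since $\rho$ is a representation of all of $\Gamma$ the quotient $F/E_4$ transforms correctly. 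The Auslander--Buchsbaum and graded Nakayama steps are also standard and fine. (Two small omissions: Benson's notion of regular sequence also requires $(E_4,E_6)M(\rho)\neq M(\rho)$, which needs the minimal-weight argument, and the whole statement presupposes $M(\rho)\neq 0$, which is itself a nontrivial existence theorem.)

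The genuine gap is finite generation. A linear growth bound $\dim M_k(\rho)=O(k)$ does \emph{not} imply that the graded module $M(\rho)$ is finitely generated over $\C[E_4,E_6]$: a graded module can have bounded graded pieces in every degree and still need infinitely many generators (for instance $\bigoplus_{k\ge 0}\C$, placed in degrees $0,1,2,\dots$ with the positive-degree part of the ring acting by zero, has one-dimensional graded pieces and is not finitely generated). Finite generation is in fact one of the hard inputs to the free module theorem; in the literature it comes either from the modular derivative and the fact that the components of a $d$-dimensional vector-valued modular form satisfy a monic modular linear differential equation of order $d$, which embeds $M(\rho)$ into a visibly finitely generated module over the Noetherian ring $\C[E_4,E_6]$, or from coherence of a pushforward sheaf on the modular orbifold. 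As written, the inference ``linear growth $\Rightarrow$ generators concentrated in finitely many weights'' is not valid, and the argument does not go through without replacing it. Relatedly, your rank computation invokes $\dim M_k(\rho)=(\dim\rho)k/12+O(1)$, a Riemann--Roch-type dimension formula whose standard proofs essentially already contain the theorem being proved; it would be cleaner either to cite that formula explicitly as an external input or to compute the rank as $\dim_K M(\rho)\otimes_{M(\Gamma)}K$ with $K=\mathrm{Frac}(M(\Gamma))$ via a Wronskian argument.
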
  

\nin Theorem \ref{MarksMason} has been used to study the arithmetic of vector-valued modular forms for representations of $\Gamma$ in \cite{marksfourier}, \cite{franc2013fourier}, \cite{Mason2012}. There are multiple proofs of Theorem \ref{MarksMason} and each one offers its own perspective and insights. Theorem \ref{MarksMason} was proven by Marks and Mason in \cite{marksmason}, by Terry Gannon in \cite{gannon2014theory}, and by Candelori and Franc in \cite{francfreemodule}. \\ 

\nin Mason has shown that if $H$ is equal to $\Gamma^{2}$, the unique subgroup of $\Gamma$ of index two, then $M(\rho)$ need not be a free module over $M(\Gamma^{2}).$ A proof of this fact together with the result that $M(\rho)$ need not even be projective over $M(\Gamma^{2})$ appears in Section $6$ of \cite{structure}. 
In view of this negative result, it is natural to ask if one may prove a positive result about the structure of $M(\rho)$ as a $M(H)$-module. We prove the following:  

\begin{thm} \label{main} $M(\rho)$ is Cohen-Macaulay as a $M(H)$-module. 
\end{thm}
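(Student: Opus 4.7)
The plan is to prove Theorem \ref{main} by reducing to Theorem \ref{MarksMason} via induced representations, and then transferring the Cohen-Macaulay property along the finite graded ring extension $M(\Gamma)\hookrightarrow M(H)$.

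The first step, which I expect to be the main obstacle, is to establish a natural graded isomorphism of $M(\Gamma)$-modules
$$M(\rho) \;\cong\; M\bigl(\Ind_{H}^{\Gamma}\rho\bigr),$$
where the left-hand side is viewed as an $M(\Gamma)$-module through the inclusion $M(\Gamma)\subseteq M(H)$. Concretely, after choosing right coset representatives $\gamma_{1},\dots,\gamma_{n}$ for $H\backslash\Gamma$, one can send a weight-$k$ form $F$ for $\rho$ to the function whose $i$-th block is $F|_{k}\gamma_{i}$; this map is bijective onto weight-$k$ forms for the induced representation. What has to be checked is that the equivariance and holomorphicity-at-cusps conditions are interchanged correctly, i.e.\ that "holomorphic at every $H$-cusp" for $F$ matches "holomorphic at the unique $\Gamma$-cusp" for the induced object. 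This is a Frobenius-reciprocity-type computation, but it must be carried out carefully in the vector-valued setting, where holomorphy at the cusps is more delicate and where one must track how the various $q$-expansions of $F$ at representatives of the $H$-cusps repackage into a single $q$-expansion for the induced form.

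Granting this isomorphism, Theorem \ref{MarksMason} immediately implies that $M(\rho)$ is a free $M(\Gamma)$-module of rank $[\Gamma:H]\cdot\dim\rho$, and in particular $M(\rho)$ is Cohen-Macaulay of depth $2$ over the graded polynomial ring $M(\Gamma)=\C[E_{4},E_{6}]$ of Krull dimension $2$. Applying the same construction with $\rho$ replaced by the trivial representation of $H$ shows that $M(H)$ itself is a free, hence finite, graded $M(\Gamma)$-module. I would finish by invoking the standard commutative-algebra fact that for a finite graded extension $R\hookrightarrow S$ and a finitely generated graded $S$-module $N$, the depth and Krull dimension of $N$ with respect to the irrelevant maximal ideals on either side coincide, so $N$ is Cohen-Macaulay over $R$ if and only if it is Cohen-Macaulay over $S$. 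Applied with $R=M(\Gamma)$, $S=M(H)$, and $N=M(\rho)$, this yields Theorem \ref{main}.
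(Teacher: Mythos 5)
Your proposal is correct, and its first half coincides with the paper's: the map sending $F\in M_{k}(\rho)$ to the column vector with blocks $F|_{k}g_{i}^{-1}$ is exactly the paper's Lemma \ref{induction}, where the matching of the equivariance and cusp conditions is carried out in the way you anticipate. Where you genuinely diverge is the endgame. The paper does \emph{not} deduce the depth bound from freeness over $M(\Gamma)$: it proves directly, by an explicit $q$-expansion argument, that $\Delta, E_{4}$ is a regular sequence on $M(\rho)$ (these elements lie in $M(\Gamma)\subseteq M(H)$, so this gives $\textrm{depth}_{M(H)}M(\rho)\geq 2$ immediately), and separately shows $\textrm{dim}_{M(H)}M(\rho)=2$ from $\textrm{Ann}_{M(H)}M(\rho)=0$ together with invariance of Krull dimension under the finite integral extension $M(\Gamma)\subseteq M(H)$; Theorem \ref{MarksMason} enters only there, via the induction lemma applied to the trivial representation. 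Your route instead reads off Cohen--Macaulayness over $M(\Gamma)=\C[E_{4},E_{6}]$ at once from freeness (Theorem \ref{MarksMason} plus the induction isomorphism) and then transfers it across the finite graded extension $M(\Gamma)\subseteq M(H)$, using that depth and dimension of a finitely generated graded $M(H)$-module agree over the two rings because $\mathfrak{m}_{M(\Gamma)}M(H)$ has the irrelevant ideal of $M(H)$ as its radical. That transfer statement is true and standard, but it is the one step not handed to you by the induction lemma, so in a written version you should either cite it precisely or prove it (the dimension half is the integral-extension fact the paper already uses; the depth half needs the observation that regular sequences and the Ext/Koszul characterization of depth only see the ideal generated in $M(H)$). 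The trade-off: your argument is shorter and makes the role of freeness over $M(\Gamma)$ transparent, while the paper's keeps the depth computation elementary and self-contained and leans on commutative algebra only for the more routine dimension comparison.
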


\nin We shall also explain how to apply Theorem \ref{main} to prove the following theorem. 
\begin{thm} \label{free} Let $H$ denote a finite index subgroup of $\Gamma$ such that there exist modular forms $X, Y \in M(H)$ for which $X$ and $Y$ are algebraically independent 
and $M(H) = \C[X,Y].$ Then $M(\rho)$ is a free $M(H)$-module of rank $\textrm{dim } \rho.$
\end{thm}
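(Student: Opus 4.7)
The plan is to combine Theorem \ref{main} with the graded Auslander--Buchsbaum formula. Since $X$ and $Y$ are algebraically independent, $M(H) = \C[X,Y]$ is a weighted polynomial ring, and therefore a Noetherian Cohen--Macaulay graded domain of Krull dimension $2$ whose irrelevant maximal ideal $\mathfrak{m} = (X,Y)$ has depth $2$.

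First I would verify that $M(\rho)$ is a finitely generated, torsion-free, graded $M(H)$-module of Krull dimension $2$. Finite generation is a standard feature of holomorphic vector-valued modular forms, and torsion-freeness is immediate because multiplication by any nonzero element of $M(H)$ is injective on the ambient space of meromorphic vector-valued modular forms. A nonzero torsion-free module over the two-dimensional domain $M(H)$ has full support, so $\dim_{M(H)} M(\rho) = 2$. Theorem \ref{main} then gives $\textrm{depth}_{\mathfrak{m}} M(\rho) = 2$, and the graded Auslander--Buchsbaum formula yields
$$\textrm{pd}_{M(H)} M(\rho) \;=\; \textrm{depth}(M(H)) - \textrm{depth}_{\mathfrak{m}} M(\rho) \;=\; 2 - 2 \;=\; 0.$$
Hence $M(\rho)$ is a finitely generated projective graded $M(H)$-module, and graded Nakayama forces it to be free.

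To determine the rank, let $\tilde{\rho} := \Ind_{H}^{\Gamma} \rho$, a representation of $\Gamma$ of dimension $[\Gamma:H] \dim \rho$. The standard induction construction for vector-valued modular forms supplies a natural isomorphism $M(\rho) \cong M(\tilde{\rho})$ of $M(\Gamma)$-modules. By Theorem \ref{MarksMason}, $M(\tilde{\rho})$ is free over $M(\Gamma)$ of rank $[\Gamma:H] \dim \rho$; applying the same construction to the trivial representation of $H$ shows that $M(H)$ itself is free over $M(\Gamma)$ of rank $[\Gamma:H]$. Writing $M(\rho)$ as a free $M(H)$-module of rank $r$, its rank as an $M(\Gamma)$-module equals $r[\Gamma:H]$, which forces $r = \dim \rho$.

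The main obstacle is really the structural input: verifying finite generation and the Krull dimension computation for $M(\rho)$, and arranging the $M(\Gamma)$-equivariant isomorphism $M(\rho) \cong M(\Ind_{H}^{\Gamma} \rho)$. Once these are in place, the freeness is a formal consequence of Theorem \ref{main} via Auslander--Buchsbaum, and the rank calculation reduces to a comparison of $M(\Gamma)$-module ranks.
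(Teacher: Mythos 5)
Your proposal is correct, and its overall strategy coincides with the paper's: both deduce freeness from Theorem \ref{main} together with the hypothesis that $M(H)=\C[X,Y]$, and both compute the rank by comparing ranks over $M(\Gamma)$ via the isomorphism $M(\rho)\cong M(\Ind_{H}^{\Gamma}\rho)$ and the fact that $M(H)$ is $M(\Gamma)$-free of rank $[\Gamma:H]$ (the paper's Lemma \ref{induction} and Theorem \ref{MarksMason}, exactly as you use them). The one genuine difference is the commutative-algebra mechanism for the freeness step. The paper invokes Benson's Theorem $4.3.5$, which characterizes graded Cohen--Macaulay modules as those that are free over any homogeneous polynomial subring of parameters over which they are finitely generated; applied with the subring equal to $M(H)$ itself, this gives freeness in one stroke. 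You instead use regularity of $\C[X,Y]$ to get finite projective dimension, the graded Auslander--Buchsbaum formula to get $\mathrm{pd}_{M(H)}M(\rho)=\mathrm{depth}\,M(H)-\mathrm{depth}_{\mathfrak m}M(\rho)=0$, and graded Nakayama to pass from projective to free. Your route makes transparent exactly where the polynomial-ring hypothesis enters (it is what makes Auslander--Buchsbaum applicable), while the paper's route, being stated for an arbitrary homogeneous system of parameters, also yields for free the more general fact that $M(\rho)$ is free over, say, $\C[E_4,E_6]$ even when $M(H)$ is not polynomial. The supporting facts you flag as needing verification --- finite generation and torsion-freeness of $M(\rho)$, vanishing of the annihilator so that the Krull dimension (and hence, by Cohen--Macaulayness, the depth) is $2$, and the $M(\Gamma)$-equivariant induction isomorphism --- are precisely the contents of the paper's Lemmas \ref{induction} and \ref{krull}, so nothing essential is missing.
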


\nin We remark that Theorem \ref{free} may also be obtained by applying the work of Candelori and Franc in \cite{structure}. A complete list of the finitely many subgroups $H$ which satisfy the hypothesis of Theorem \ref{free} is given in \cite{polynomial}. Two such subgroups are $\Gamma$ and $\Gamma_{0}(2).$ The author employs Theorem \ref{free} to study the arithmetic of vector-valued modular forms on $\Gamma_{0}(2)$ in \cite{gottesman}. \\

\nin In \cite{structure}, Candelori and Franc study the commutative algebra properties of vector-valued modular forms in a geometric context. If $H$ is a genus zero Fuchsian group of the first kind, with finite covolume and with finitely many cusps, then they define a collection of \textit{geometrically weighted} vector-valued modular forms, $\textrm{GM}(\rho)$, which contains $M(\rho)$, and a collection of geometrically weighted modular forms $S(H)$, which contains $M(H).$ They prove that
 $\textrm{GM}(\rho)$ is Cohen-Macaulay as a $S(H)$-module. The ideas in \cite{structure} involve the classification of vector bundles over orbifold curves of genus zero and are quite interesting. We emphasize that in our paper, Theorem \ref{main} applies to the $M(H)$-module $M(\rho)$ and holds for \textit{all} finite index subgroups of $\Gamma.$ \\

\nin We refer the reader to Benson \cite{benson} for the relevant definitions and results from commutative algebra which we shall use in this paper.

\section{Acknowledgements} 
\nin The inspiration for this paper was the proof of the free module theorem in \cite{marksmason}. 
It is my pleasure to thank Cam Franc, Geoff Mason, Matt Satriano, and Noriko Yui for their interest and encouragement.  
 
\section{Proofs} 
\nin The following lemma will be used in the proof of Theorem \ref{main} and Theorem \ref{free}. 

\begin{lemma} \label{induction} Let $\textrm{Ind}_{H}^{\Gamma} \rho$ denote the induction of the representation $\rho$ from $H$ to $\Gamma.$ Then $M(\rho)$ and $M(\textrm{Ind}_{H}^{\Gamma} \rho)$ are isomorphic as $\Z$-graded $M(\Gamma)$-modules. 
\end{lemma}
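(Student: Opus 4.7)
My plan is to construct an explicit isomorphism between $M(\rho)$ and $M(\textrm{Ind}_{H}^{\Gamma} \rho)$ by stacking slash-translates. Fix a set of right coset representatives $\gamma_{1} = I, \gamma_{2}, \ldots, \gamma_{n}$ for $H \backslash \Gamma$, where $n = [\Gamma : H]$, so that $\Gamma = \bigsqcup_{i=1}^{n} H \gamma_{i}$. For $F \in M_{k}(\rho)$, define
\[
\Phi(F) := \bigl(F|_{k} \gamma_{1},\, F|_{k} \gamma_{2},\, \ldots,\, F|_{k} \gamma_{n}\bigr),
\]
viewed as a holomorphic function $\mathfrak{H} \to \C^{n \cdot \textrm{dim }\rho}$. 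The whole argument rests on identifying $\Phi(F)$ as a vector-valued modular form for $\textrm{Ind}_{H}^{\Gamma}\rho$ in the coordinatization determined by these coset representatives.

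The central verification is the transformation law under $\Gamma$. For any $\gamma \in \Gamma$ and any index $i$, there is a unique index $\sigma(i)$ and element $h_{i}(\gamma) \in H$ with $\gamma_{i}\gamma = h_{i}(\gamma)\gamma_{\sigma(i)}$. The $i$-th component of $\Phi(F)|_{k}\gamma$ is then
\[
(F|_{k}\gamma_{i})|_{k}\gamma \;=\; F|_{k}\bigl(h_{i}(\gamma)\gamma_{\sigma(i)}\bigr) \;=\; \rho\bigl(h_{i}(\gamma)\bigr)\,(F|_{k}\gamma_{\sigma(i)}),
\]
which is precisely the standard matrix formula for the action of $\textrm{Ind}_{H}^{\Gamma}\rho$ on $\bigoplus_{i} V_{\rho}$. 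Hence $\Phi(F)|_{k}\gamma = (\textrm{Ind}_{H}^{\Gamma}\rho)(\gamma)\,\Phi(F)$ for every $\gamma \in \Gamma$.

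Next I would check the structural properties. Holomorphy at every cusp of $\Gamma$ for each component $F|_{k}\gamma_{i}$ amounts to holomorphy of $F|_{k}\gamma$ at $\infty$ for all $\gamma \in \Gamma$, which is exactly the cusp condition for $F$ on $H \backslash (\mathfrak{H} \cup \mathbb{P}^{1}(\Q))$. The map preserves weight, giving the $\Z$-grading; and for $m \in M_{t}(\Gamma)$ we have $m|_{t}\gamma_{i} = m$, so $\Phi(mF) = m\cdot \Phi(F)$, giving $M(\Gamma)$-linearity. For bijectivity I would construct an inverse $\Psi$ by sending $\phi = (\phi_{1},\ldots,\phi_{n}) \in M(\textrm{Ind}_{H}^{\Gamma}\rho)$ to its first component $\phi_{1}$. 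Since $\gamma_{1} = I$, for $h \in H$ we obtain $\sigma(1)=1$ and $h_{1}(h)=h$, so the first slot of $(\textrm{Ind}_{H}^{\Gamma}\rho)(h)\phi$ reads $\rho(h)\phi_{1}$, yielding the $H$-transformation law $\phi_{1}|_{k} h = \rho(h)\phi_{1}$. One then confirms $\Phi \circ \Psi = \textrm{id}$ and $\Psi \circ \Phi = \textrm{id}$ by comparing components.

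The main technical obstacle is purely bookkeeping: keeping the left/right coset conventions consistent and ensuring the cocycle $(i,\gamma)\mapsto h_{i}(\gamma)$ matches the matrix realization of $\textrm{Ind}_{H}^{\Gamma}\rho$ one chooses. Some care is also needed for holomorphy in the inverse direction: to see that $\phi_{1}$ is holomorphic at every cusp of $H$, note that any $\gamma \in \Gamma$ factors as $h\gamma_{i}$ with $h \in H$, whence $\phi_{1}|_{k}\gamma = \rho(h)(\phi_{1}|_{k}\gamma_{i}) = \rho(h)\phi_{i}$, and $\phi_{i}$ is holomorphic at $\infty$ by hypothesis on $\phi$.
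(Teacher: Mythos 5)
Your proposal is correct and takes essentially the same approach as the paper: you build the isomorphism by stacking the slash-translates $F|_{k}\gamma_{i}$ over coset representatives (your right-coset representatives $\gamma_{i}$ play the role of the paper's $g_{i}^{-1}$, so the maps coincide) and invert it by projecting onto the first component. The only cosmetic difference is that you deduce bijectivity from the identity $\phi_{i}=\phi_{1}|_{k}\gamma_{i}$, obtained from the transformation law at $\gamma=\gamma_{i}$, whereas the paper proves injectivity of the projection by a short contradiction argument.
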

\begin{proof} Let $\{g_{i}: 1 \leq i \leq [\Gamma:H] \}$ denote a complete set of left coset representatives of $H$ in $\Gamma$ where $g_1$ denotes the identity matrix. Let $k \in \Z$ and let $F \in M_{k}(\rho).$
We define $\Phi(F) := [F|_{k} g_{1}^{-1}, F|_{k} g_{2}^{-1},..., F|_{k} g_{[\Gamma: H]}^{-1}]^{T},$ where the superscript $T$ denotes the transpose. We claim that $\Phi(F) \in M_{k}(\textrm{Ind}_{H}^{\Gamma} \rho).$ We first note that since $F \in M_{k}(\rho)$, we must have that for all $g \in H$, the function $F|_{k} g$ is holomorphic in $\mathfrak{H}$ and it has a holomorphic $q$-expansion. Thus $\Phi(F)$ is holomorphic in $\mathfrak{H}$ and $\Phi(F)$ is holomorphic at the cusp  
$\Gamma \backslash (\mathfrak{H} \bigcup \mathbb{P}^{1}(\Q))$ since each of its component functions $F|_{k} g_{i}^{-1}$ has a holomorphic $q$-expansion. To prove that $\Phi(F) \in M_{k}(\textrm{Ind}_{H}^{\Gamma} \rho)$, it now suffices to show that for all  $g \in \Gamma$,  
\begin{align}
\label{inductionone}
\Phi(F)|_{k} g = \textrm{Ind}_{H}^{\Gamma} \rho(g) \Phi(F).
\end{align}

\nin Let $n = [\Gamma: H].$ Let $\rho^{\bullet}$ denote the function on $\Gamma$ which is defined by the conditions that $\rho^{\bullet}|_{H} = \rho$ and $\rho^{\bullet}(g) = 0$ if $g \not \in H.$ With respect to our choice of left coset representatives for $H$ in $\Gamma$, the $i$-th row and $j$-th column block of the matrix $\textrm{Ind}_{H}^{\Gamma} \rho(g)$ is equal to $\rho^{\bullet}(g_{i}^{-1}gg_{j}).$ Thus equation \ref{inductionone}
is equivalent to the assertion that for each integer $i$ with $1 \leq i \leq n$, 
\begin{align}
\label{inductiontwo}
(F|_{k}g_i^{-1})|_{k} g =\sum_{t=1}^{n} \rho^{\bullet}(g_i^{-1} g g_{t}) F|_{k} g_{t}^{-1}.
\end{align}
\nin Fix an index $i$ with $1 \leq i \leq n$  and fix $g \in \Gamma.$ Then there exists a unique index $j$ for which $g_i^{-1} g g_j \in H.$
\nin We then have that $F|_{k} g_i^{-1}g g_j = \rho(g_i^{-1} g g_j)F.$
Therefore 
\begin{align}
 (F|_{k}g_i^{-1})|_{k} g 
& = F|_{k} g_i^{-1}g & \\
& = (F|_{k} g_i^{-1}gg_j)|_{k} g_{j}^{-1}\\
 & = (\rho(g_i^{-1} g g_j)F)|_{k} g_{j}^{-1} \\
& = \rho(g_i^{-1} g g_j) F|_{k} g_j^{-1} \\
& =\sum_{t=1}^{n} \rho^{\bullet}(g_i^{-1} g g_{t}) F|_{k} g_{t}^{-1}. \end{align}
\nin We have thus proven that equation \ref{inductiontwo} holds and conclude that $\Phi(F) \in M_{k}(\textrm{Ind}_{H}^{\Gamma}(\rho)).$
\nin For each integer $k$, we have defined the map $\Phi: M_{k}(\rho) \rightarrow M_{k}(\textrm{Ind}_{H}^{\Gamma}(\rho))$ and we extend it by linearity to a map $\Phi: M(\rho) \rightarrow M(\textrm{Ind}_{H}^{\Gamma} \rho).$ \\

\nin We now check that $\Phi$ is a map of $\Z$-graded $M(\Gamma)$-modules. 
Let $m \in M_{t}(\Gamma).$ We recall that $F \in M_{k}(\rho).$ 
Then \begin{align} m \Phi(F) & = m  [F|_{k} g_{1}^{-1}, F|_{k} g_{2}^{-1},..., F|_{k} g_{n}^{-1}]^{T} \\
& = [mF|_{k+t} g_{1}^{-1}, mF|_{k+t} g_{2}^{-1},..., mF|_{k +t} g_{n}^{-1}]^{T} \\
& = \Phi(mF).
\end{align}
\nin Finally, we check that $\Phi$ is a bijection. Let $X \in M_{k}(\textrm{Ind}_{H}^{\Gamma} \rho).$ The codomain of $X$ is $\C^{\textrm{dim } (\textrm{Ind}_{H}^{\Gamma} \rho)}$ and 
$\textrm{dim} (\textrm{Ind}_{H}^{\Gamma} \rho) = n \textrm{dim } \rho.$
Let $X_1,...,X_n: \mathfrak{H} \rightarrow \C^{\textrm{dim } \rho}$ denote the holomorphic functions for which $X = [X_1,...,X_n]^{T}.$
We define the map $\pi$ by setting $\pi(X) = X_{1}$. We will show that $X_1 \in M_{k}(\rho).$
Let $g \in H$. There exists a unique index $j$ for which $g_1^{-1}gg_{j} \in H$. As $g_1$ is the identity matrix and $g \in H$, we must have that $g_j = g_1.$
The fact that $X \in M_{k}(\Ind_{H}^{\Gamma} \rho)$ implies that 
\begin{align}
X_1|_{k} g = \sum_{t=1}^{n} \rho^{\bullet}(g_1^{-1} g g_{t}) X_t = \rho^{\bullet}(g_1^{-1}gg_1)X_1 = \rho(g)X_{1}. \end{align}

\nin As $X = [X_1,...,X_{n}]^{T}$ is a holomorphic vector-valued modular form, $X_1$ is holomorphic in $\mathfrak{H}$ and $X_1$ has a holomorphic $q$-expansion. Thus for each $g \in \Gamma$, $X_1|_{k} g$ has a holomorphic $q$-expansion since 
$X_1|_{k} g = \rho(g) X_1$ and $X_1$ has a holomorphic $q$-series expansion. We have proven that $X_1$ is holomorphic at all of the cusps of $H \backslash (\mathfrak{H} \bigcup \mathbb{P}^{1}(\Q))$ and conclude that $\pi(X) = X_{1} \in M_{k}(\rho).$ We also see that $\pi \circ \Phi(F) = \Phi(F)_{1} = F|_{k} g_1^{-1} = F$ since $g_1$ is the identity matrix.  As $\pi \circ \Phi$ is equal to the identity map, $\pi$ must be surjective.  All that remains is to prove that $\pi$ is injective. \\

\nin Let $X \in M_{k}(\textrm{Ind}_{H}^{\Gamma} \rho)$ such that $\pi(X) = 0$. We write $X = [ X_1, X_2,...,X_n ]^{T}$ where each $X_i$ is a holomorphic function from $\mathfrak{H}$ to $\C^{\textrm{dim } \rho}.$ 
We claim that $X = 0$. Suppose not. Then there exists some index $i$ with $X_i \neq 0$. 
Let $g \in g_i H g_1^{-1}$. Then $g_i^{-1}gg_j \in H$ if and only if $g_{1} = g_{j}.$
Thus $X_{i}|_{k} g = \sum_{t=1}^{n} \rho^{\bullet}(g_i^{-1}gg_{t})X_{t} = \rho(g_{i}^{-1}gg_{1})X_{1}.$
As $\pi(X) = X_1 = 0$, we have that $X_{i}|_{k} g = 0$. 
Hence $X_i = (X_i|_{k}g)|_{k} g^{-1} = 0$, a contradiction. Thus $X = 0$ and $\pi$ is therefore injective. 
We have proven that $\pi$ and hence $\Phi$ is a bijection and the lemma now follows. 
 \end{proof}

\nin Let $q = e^{2 \pi i \tau}$, $\sigma_{3}(n) = \sum_{d \mid n} d^{3}$, and let $S_{k}(\Gamma)$ denote the space of weight $k$ cusp forms on $\Gamma.$
\nin We recall the Eisenstein series $E_4 = 1 + 240 \sum_{n=1}^{\infty} \sigma_{3}(n)q^{n} \in M_{4}(\Gamma)$ and the cusp form $\Delta = q(1-q^n)^{24} \in S_{12}(\Gamma).$  
\begin{lemma} The sequence $\Delta, E_4$ is a regular sequence for the $M(H)$-module $M(\rho).$
\end{lemma}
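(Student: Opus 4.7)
\nin The plan is to verify the three standard conditions defining a regular sequence on the $M(H)$-module $M(\rho)$: that $\Delta$ is a non-zerodivisor on $M(\rho)$; that $E_4$ is a non-zerodivisor on $M(\rho)/\Delta M(\rho)$; and that $M(\rho)/(\Delta, E_4)M(\rho) \neq 0$. These conditions depend only on how $\Delta$ and $E_4$ act on $M(\rho)$ and not on the ambient ring $M(H)$, so they may be checked by pointwise analytic means. The two geometric inputs that drive the argument are that $\Delta$ vanishes nowhere on $\mathfrak{H}$, and that $E_4$ vanishes at no cusp of $H$: the constant term of its $q$-expansion at every cusp equals $1$, because $E_4$ is a modular form for the full modular group $\Gamma$.

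\nin The first condition is immediate. If $\Delta F = 0$ with $F \in M(\rho)$, pointwise evaluation on $\mathfrak{H}$ forces $F(\tau) = 0$ for all $\tau \in \mathfrak{H}$, so $F = 0$. For the second condition, suppose $F \in M_{k}(\rho)$ and $G \in M_{k-8}(\rho)$ satisfy $E_4 F = \Delta G$, and set $F'(\tau) := F(\tau)/\Delta(\tau)$. Then $F'$ is holomorphic on $\mathfrak{H}$ since $\Delta$ is non-vanishing there, and it transforms like a weight $k - 12$ vector-valued form for $\rho$ because $\Delta$ is $\Gamma$-invariant in weight $12$. The identity $E_4 F' = G$, together with the invertibility of the $q$-expansion of $E_4$ at every cusp of $H$, lets one divide the holomorphic $q$-expansion of $G$ at each cusp by that of $E_4$ to obtain a holomorphic $q$-expansion of $F'$. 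Hence $F' \in M_{k-12}(\rho)$ and $F = \Delta F' \in \Delta M(\rho)$.

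\nin For the third condition, I would combine Lemma \ref{induction} with Theorem \ref{MarksMason} to see that $M(\rho) \cong M(\Ind_{H}^{\Gamma} \rho)$ is a free $M(\Gamma)$-module of rank $[\Gamma : H]\,\textrm{dim } \rho$. In particular, when $\textrm{dim } \rho > 0$ the module $M(\rho)$ is nonzero, finitely generated, and bounded below in weight (an $M(\Gamma)$-basis consists of finitely many elements of some minimum weight, and $M(\Gamma)$ is concentrated in nonnegative weights). Let $k_{0}$ denote the smallest integer with $M_{k_{0}}(\rho) \neq 0$. Since $\Delta$ has weight $12$ and $E_4$ has weight $4$, the weight $k_{0}$ component of $\Delta M(\rho) + E_4 M(\rho)$ is contained in $\Delta M_{k_{0} - 12}(\rho) + E_4 M_{k_{0} - 4}(\rho) = 0$, so $M_{k_{0}}(\rho)$ injects into $M(\rho)/(\Delta, E_4)M(\rho)$, which is therefore nonzero. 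The case $\textrm{dim } \rho = 0$ is vacuous.

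\nin The main obstacle is the analytic verification in the second step that $F'$ is holomorphic at every cusp of $H$. This requires a careful treatment of $q$-expansions of vector-valued modular forms at an arbitrary cusp of a finite index subgroup, as developed in the references cited in the introduction; one must take care that the possible fractional exponents introduced by the monodromy of $\rho$ at a cusp do not interfere with the division. Once the framework is in place, the key fact being invoked is the elementary observation that a power series with non-vanishing constant term is invertible in the formal power series ring, so multiplication by $1/E_4$ preserves holomorphy.
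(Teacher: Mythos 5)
Your proposal is correct and follows essentially the same route as the paper: $\Delta$ is a non-zerodivisor because it is nowhere zero on $\mathfrak{H}$, the relation $E_4F=\Delta G$ is analyzed at the cusps via $q$-expansions to show $F/\Delta\in M(\rho)$, and the non-degeneracy condition is settled by a minimal-weight argument (the paper checks $M(\rho)\neq\Delta M(\rho)$ and $E_4(M(\rho)/\Delta M(\rho))\neq M(\rho)/\Delta M(\rho)$ separately, which is equivalent to your single condition $M(\rho)/(\Delta,E_4)M(\rho)\neq 0$). The only cosmetic difference is that you divide $G$ by the unit power series $E_4$ while the paper divides $Y|_k\gamma$ by $\Delta$ after observing it vanishes to order at least one at each cusp; these are the same computation.
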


\begin{proof} It is clear that $\Delta$ is a non-zero-divisor for $M(\rho)$ since $\Delta$ has no zeros in $\mathfrak{H}.$ To prove that $\Delta$ is regular for $M(\rho)$, it suffices to show that 
$M(\rho) \neq \Delta M(\rho).$  Suppose that $M(\rho) = \Delta M(\rho)$. Let $X$ denote a non-zero element in $M(\rho)$ of minimal weight $w$. Then $X = \Delta V$ for some $V \in M_{w - 12}(\rho)$. 
The weight of $V$ is less than weight of $X$. This is a contradiction and therefore $M(\rho) \neq \Delta M(\rho).$ We conclude that $\Delta$ is regular for $M(\rho)$. \\

\nin We will show that $E_4$ is regular for $M(\rho)/ \Delta M(\rho)$. We have previously shown that  \newline 
\nin $M(\rho)/ \Delta M(\rho) \neq 0.$ We now argue that $E_4$ is a non-zero-divisor for the module $M(\rho)/ \Delta M(\rho)$. Suppose that $Y \in M(\rho)$ and $E_4(Y + \Delta M(\rho)) = \Delta M(\rho)$. 
Then $E_4 Y \in \Delta M(\rho)$.  We write $E_4 Y = \Delta Z$ for some $Z \in M(\rho).$  We wish to show that $Y \in \Delta M(\rho)$ and it suffices to prove this when $Y$ is a vector-valued modular form. Let $k$ denote the weight of $Y$. Let $y_i$ denote the $i$-th component function of $Y$, let $z_i$ denote the $i$-th component function of $Z$, and let $\gamma \in \Gamma$. Therefore $E_4 y_i = \Delta z_i$ and 
$E_{4} (y_{i}|_{k} \gamma) = \Delta (z_{i}|_{k-8} \gamma).$ As $\Delta = q + O(q^2)$ and $E_4 = 1 + O(q)$, all the powers of $q$ in $y_{i}|_{k} \gamma$ occur to at least the first power. We have thus shown that $\Delta^{-1}(y_i|_{k} \gamma)$ contains no negative powers of $q$ and is therefore holomorphic at the cusp $\gamma \cdot \infty$. Hence $\frac{Y}{\Delta}$ is holomorphic at all of the cusps of $H \backslash (\mathfrak{H} \bigcup \mathbb{P}^{1}(\Q))$  . As $\Delta$ does not vanish in $\mathfrak{H}$, we have that $\frac{Y}{\Delta}$ is holomorphic in $\mathfrak{H}$. Hence $\frac{Y}{\Delta} \in M(\rho)$ and thus $Y + \Delta M(\rho) = \Delta M(\rho).$  We have proven that $E_4$ is a non-zero-divisor for the module $M(\rho)/ \Delta M(\rho).$ \\

\nin Finally, we must show that $E_{4}(M(\rho)/\Delta M(\rho)) \neq M(\rho)/ \Delta M(\rho).$ 
We recall that $X$ denotes a nonzero element in $M(\rho)$ of minimal weight $w.$ If $M(\rho)/\Delta M(\rho) = E_{4} (M(\rho)/ \Delta M(\rho))$ then there exists some $F \in M(\rho)$ 
such that $X + \Delta M(\rho) = E_4 F + \Delta M(\rho).$ Let $G \in M(\rho)$ such that $X =  E_4 F + \Delta G.$ We may write $F$ and $G$ uniquely as a sum of their homogeneous components. Let $F_{w - 4}$ and 
$G_{w - 12}$ denote the weight $w -4$ and the weight $w - 12$ homogeneous components of $F$ and $G.$ Then $X = E_4 F_{w -4} + \Delta G_{w - 12}.$ We must have that $F_{w - 4} \neq 0$ or $G_{w -12} \neq 0$ since 
$X \neq 0.$ Thus $F_{w-4}$ or $G_{w -12}$ is a nonzero element of $M(\rho)$ whose weight is less than the weight of $X.$ This is a contradiction and we conclude that $M(\rho)/\Delta M(\rho) \neq E_{4} (M(\rho)/ \Delta M(\rho)).$ We have shown that  $E_4$ is regular for $M(\rho)/\Delta M(\rho)$ and our proof is complete. \end{proof}

\begin{lemma} \label{krull} The Krull dimension of the $M(H)$-module $M(\rho)$ is equal to two. 
\end{lemma}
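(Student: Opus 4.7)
The plan is to compute the Krull dimension by comparing $M(H)$ to the polynomial ring $M(\Gamma) = \C[E_4,E_6]$, which has Krull dimension two. The key input is that Lemma \ref{induction} combined with Theorem \ref{MarksMason} implies $M(\rho) \cong M(\Ind_{H}^{\Gamma} \rho)$ is a free $M(\Gamma)$-module of rank $[\Gamma:H] \cdot \dim \rho$. Applying the same combination to the trivial representation of $H$, one sees that $M(H)$ itself is a free $M(\Gamma)$-module of rank $[\Gamma:H]$; in particular, the inclusion $M(\Gamma) \hookrightarrow M(H)$ is an integral extension of Noetherian domains and $\dim M(H) = \dim M(\Gamma) = 2$.

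With this structure in hand, the dimension calculation is largely formal. Since $M(\rho)$ is finitely generated over $M(H)$, the upper bound
\[
\dim_{M(H)} M(\rho) \;=\; \dim\!\bigl(M(H)/\mathrm{Ann}_{M(H)}(M(\rho))\bigr) \;\leq\; \dim M(H) \;=\; 2
\]
is immediate. For the matching lower bound, I would write $I = \mathrm{Ann}_{M(H)}(M(\rho))$ and restrict scalars along $M(\Gamma) \subset M(H)$: the intersection $I \cap M(\Gamma)$ equals $\mathrm{Ann}_{M(\Gamma)}(M(\rho))$, which vanishes because $M(\rho)$ is free of strictly positive rank over $M(\Gamma)$. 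Consequently $M(\Gamma)$ embeds in $M(H)/I$, and this embedding is integral since $M(H)$ is integral over $M(\Gamma)$. A final appeal to the fact that integral extensions preserve Krull dimension yields $\dim(M(H)/I) \geq \dim M(\Gamma) = 2$, matching the upper bound.

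I do not anticipate a serious obstacle, as the argument packages a handful of standard commutative algebra tools together with Theorem \ref{MarksMason}. The one point requiring care is the correct interpretation of the Krull dimension of a module as $\dim(R/\mathrm{Ann}_R M)$, and the observation that restricting the $M(H)$-annihilator of $M(\rho)$ to $M(\Gamma)$ recovers the $M(\Gamma)$-annihilator; it is this reduction that transports the freeness of $M(\rho)$ as an $M(\Gamma)$-module into a lower bound on its Krull dimension as a module over the a priori larger and less structured ring $M(H)$.
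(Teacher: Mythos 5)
Your proof is correct and follows the same overall strategy as the paper: transport the dimension computation down to $M(\Gamma)=\C[E_4,E_6]$ through the integral, module-finite extension $M(\Gamma)\subset M(H)$, with the module-finiteness supplied by Lemma \ref{induction} applied to the trivial representation together with Theorem \ref{MarksMason}. Two of your intermediate steps differ from the paper's, both in ways that work. First, the paper proves integrality of $M(H)$ over $M(\Gamma)$ by exhibiting the explicit monic polynomial $P(z)=\prod_i(z-f|_k\gamma_i)$ with $\Gamma$-invariant coefficients, whereas you deduce it from module-finiteness via the determinant trick; this lets you skip the symmetric-function computation at the cost of invoking a slightly less elementary fact. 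Second, and more substantively, the paper disposes of the annihilator at the level of $M(H)$ by an analytic argument --- a nonzero holomorphic modular form has isolated zeros, so $\mathrm{Ann}_{M(H)}(M(\rho))=0$ and the module dimension equals $\dim M(H)$ on the nose --- while you avoid any analysis by controlling the annihilator only after restriction to $M(\Gamma)$, where freeness of positive rank forces it to vanish, and then sandwiching $\dim\bigl(M(H)/I\bigr)$ between the lower bound coming from the integral embedding $M(\Gamma)\hookrightarrow M(H)/I$ and the upper bound $\dim M(H)=2$. Your route is purely algebraic and arguably more robust, since it never needs to know that $\mathrm{Ann}_{M(H)}(M(\rho))$ itself vanishes; both arguments ultimately rest on $M(\rho)\neq 0$, which is guaranteed because the free rank $[\Gamma:H]\,\mathrm{dim}\,\rho$ over $M(\Gamma)$ is positive.
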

\begin{proof} We recall that the Krull dimension of the $M(H)$-module $M(\rho)$ is defined to be the Krull dimension of the ring $M(H)/\textrm{Ann}_{M(H)}(M(\rho))$. As the zeros of a nonzero holomorphic function are isolated, $\textrm{Ann}_{M(H)} M(\rho) = 0$. Therefore the Krull dimension of $M(\rho)$ is equal to the Krull dimension of $M(H)$. It suffices to prove that the Krull dimension of $M(H)$ is equal to the Krull dimension of $M(\Gamma) = \C[E_4, E_6]$, which is equal to two. To do so, 
we use the fact (see Corollary $1.4.5$ in Benson \cite{benson}) that if $A \subset B$ are commutative rings and if $B$ is an integral extension of $A$ for which $B$ is finitely generated as an $A$-algebra then the Krull dimensions of $A$ and $B$ are equal. It therefore suffices to show that $M(H)$ is an integral extension of $M(\Gamma)$ and that $M(H)$ is finitely generated as a $M(\Gamma)$-algebra. \\

\nin  Let $\{\gamma_{i}: 1 \leq i \leq [\Gamma:H] \}$ denote a complete set of right coset representatives of $H$ in 
$\Gamma$ where $\gamma_1$ denotes the identity matrix.
If $f \in M_{k}(H)$  then $f|_{k}\gamma_1 = f$ and thus $f$ is a root of the monic polynomial $P(z) := \prod_{i=1}^{[\Gamma:H]} (z - f|_{k} \gamma_{i})  \in M(H)[z].$
If $\gamma \in \Gamma$ then let $P(z)|_{k} \gamma$ denote the polynomial obtained by replacing each monomial $cz^{t}$ of $P$ with the monomial $(c|_{k} \gamma) z^{t}.$
The fact that $\{\gamma_i \gamma: 1 \leq i \leq [\Gamma: H]\}$ is a complete set of right coset representatives of $H$ in $\Gamma$ together with the fact that $f \in M_{k}(H)$ 
implies that 
\begin{align}
\label{integral}
 P(z)|_{k} \gamma = \prod_{i=1}^{[\Gamma:H]} (z - f|_{k} \gamma_i \gamma) = \prod_{i=1}^{[\Gamma:H]} (z - f|_{k} \gamma_{i}) = P(z).
\end{align}
Thus $P(z) \in M(\Gamma)[z].$  Hence $M(H)$ is an integral extension of $M(\Gamma).$ \\

\nin Let $\alpha: H \rightarrow \C^{\times}$ denote the trivial representation of $H.$ 
Theorem \ref{MarksMason} implies that $M(\textrm{Ind}_{H}^{\Gamma} \alpha)$ is a free $M(\Gamma)$-module whose rank equals $\textrm{dim}(\textrm{Ind}_{H}^{\Gamma} \alpha) = [\Gamma: H].$
Lemma \ref{induction} tells us that $M(\alpha)$ and $M(\textrm{Ind}_{H}^{\Gamma} \alpha)$ are isomorphic as $M(\Gamma)$-modules.  Hence $M(\alpha) = M(H)$ is a free $M(\Gamma)$-module of rank $[\Gamma:H].$ In particular, $M(H)$ is finitely generated as a $M(\Gamma)$-algebra. We conclude that the Krull dimensions of $M(H)$ and $M(\Gamma)$ are equal and the lemma now follows. 
 \end{proof}

\nin We now proceed with the proof of Theorem \ref{main}.

\begin{proof} \nin We have shown that the Krull dimension of the $M(H)$-module $M(\rho)$ is equal to two and that $M(\rho)$ has a regular sequence of length two. 
Therefore the depth of $M(\rho)$ is at least two. Moreover, the depth is at most the Krull dimension (see page $50$ in \cite{benson}), which is equal to two. 
Hence the depth and the Krull dimension of $M(\rho)$ are both equal to two.

\end{proof}

\nin We shall use the following result from commutative algebra to prove Theorem \ref{free}. This result is stated and proven in Benson's book \cite{benson}. \\

\begin{thm} \label{comm} [{Theorem $4.3.5.$ in \cite{benson}}] Let $A$ denote a commutative Noetherian ring and let $M$ denote a finitely generated $A$-module. Assume that $A = \bigoplus_{j=0}^{\infty} A_{j}$ and $M = \bigoplus_{j= -\infty}^{\infty} M_{j}$ are graded, $A_0 = K$ is a field, and $A$ is finitely generated over $K$ by elements of positive degree. Then the following statements are equivalent: 
\begin{enumerate} 
\item[(i):] $M$ is Cohen-Macaulay. 
\item [(ii):] If $x_1,...x_n \in A$ are homogenous elements generating a polynomial subring \newline 
\nin $K[x_1,...,x_n] \subset A/\textrm{Ann}_{A}(M)$, over which $M$ is finitely generated, then
$M$ is a free $K[x_1,...,x_n]$-module. 
\end{enumerate} 
\end{thm}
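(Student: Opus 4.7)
\nin The plan is to prove this equivalence by combining graded Noether normalization with the Auslander--Buchsbaum formula for finitely generated graded modules over a polynomial ring. Write $B := A/\textrm{Ann}_A(M)$ and $d := \dim M$; regular sequences and Krull dimension of $M$ are the same whether computed over $A$ or over $B$, so I may work with $B$ throughout. For any homogeneous $x_1,\dots,x_n$ as in (ii), the hypothesis that $M$ is finitely generated over $R := K[x_1,\dots,x_n]$ forces $B$ to be finite over $R$, and going-up then gives $n = \dim R = \dim B = d$. So in both directions one is really comparing depth to the common value $n = d$.

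\nin For (i) $\Rightarrow$ (ii), the key observation is that because $B$ is finite over $R$, the quotient $B/(x_1,\dots,x_n)B$ is a finite-dimensional $K$-vector space, so $(x_1,\dots,x_n)B$ has the same radical as the irrelevant ideal $\mathfrak{m}_B$. Hence $\textrm{depth}_R(M) = \textrm{depth}_A(M) = d = n$: the first equality by the radical-invariance of depth, the second by the Cohen--Macaulay assumption. The graded Auslander--Buchsbaum formula applied over the polynomial ring $R$ then gives $\textrm{pd}_R(M) = 0$, so $M$ is projective over $R$, and a finitely generated graded projective module over a graded polynomial ring is free by graded Nakayama applied to a minimal set of homogeneous generators.

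\nin For (ii) $\Rightarrow$ (i), I would invoke graded Noether normalization to produce algebraically independent homogeneous elements $x_1,\dots,x_n \in B$ of positive degree, with $n = d$, over which $B$, and hence $M$, is finite. The hypothesis then says that $M$ is free over $R = K[x_1,\dots,x_n]$, which immediately implies that $x_1,\dots,x_n$ is an $M$-regular sequence and hence $\textrm{depth}(M) \geq n = d$. Since depth is always bounded above by dimension, equality holds and $M$ is Cohen--Macaulay.

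\nin The main technical obstacle is the radical-invariance-of-depth step in the direction (i) $\Rightarrow$ (ii), which is the hinge tying the Cohen--Macaulay hypothesis on $M$ as an $A$-module to its depth as an $R$-module and thereby unlocking Auslander--Buchsbaum. The cleanest route is to localize at the graded maximal ideal $\mathfrak{m}_A$ so that one is working with a Noetherian local ring and may quote standard results on systems of parameters and on the equivalence of projectivity and freeness; the graded-to-local translation is routine but must be checked to ensure that depth, projective dimension, and freeness all transfer between the two settings.
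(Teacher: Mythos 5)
The paper does not actually prove this statement: it is quoted verbatim as Theorem $4.3.5$ of Benson's book and used as a black box, so there is no in-paper argument to measure yours against. Taken on its own terms, your proof is correct and is one of the two standard routes to the result. The points you compress do all work, but they are where the content lives: that $M$ finitely generated over $R=K[x_1,\dots,x_n]$ forces $B=A/\mathrm{Ann}_A(M)$ to be finite over $R$ uses faithfulness of $M$ over $B$ (embed $B$ into $\mathrm{End}_R(M)$, a finitely generated $R$-module); the $x_i$ automatically have positive degree since degree-zero elements lie in $K$; and the equality $\mathrm{depth}_R(M)=\mathrm{depth}_A(M)$ holds because depth with respect to an ideal depends only on the radical of its image in $B$, which is the irrelevant ideal in both cases since $B/(x_1,\dots,x_n)B$ is a finite-dimensional graded $K$-algebra --- so both depths are computed by maximal $M$-regular sequences drawn from the same pool of elements. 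Your route through the Auslander--Buchsbaum formula differs from Benson's own proof, which argues directly: Cohen--Macaulayness makes $x_1,\dots,x_n$ an $M$-regular sequence, and one shows that homogeneous elements of $M$ lifting a $K$-basis of $M/(x_1M+\cdots+x_nM)$ form a free $R$-basis (graded Nakayama for generation, the regular-sequence property for independence). Benson's argument is more elementary and produces an explicit homogeneous basis --- a refinement the author of this paper invokes elsewhere --- whereas yours reaches freeness faster at the cost of importing the syzygy theorem and Auslander--Buchsbaum. Your (ii) $\Rightarrow$ (i) direction via graded Noether normalization is the standard one and is fine.
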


\nin We now give the proof of Theorem \ref{free}.
\begin{proof} 
We first note that the hypotheses of Theorem \ref{comm} are satisfied if we take $A = M(H)$ and $M = M(\rho).$
Thus statements (i) and (ii) in Theorem \ref{comm} are equivalent if $A = M(H)$ and $M = M(\rho).$
We have proven that $M(\rho)$ is Cohen-Macaulay as a $M(H)$-module.  Thus statement (i) and hence statement (ii) in Theorem \ref{comm} must be true. In particular, if $X, Y \in M(H)$ which are algebraically independent then $M(\rho)$ is a free $\C [X,Y]$-module.  The hypothesis of Theorem \ref{free} asserts that there exist such modular forms $X$ and $Y$ for which $M(H) = \C[X,Y].$ Thus the hypothesis of Theorem \ref{free} implies that $M(\rho)$ is a free $M(H)$-module. \\

\nin  We now compute the rank $r$ of $M(\rho)$ as a $M(H)$-module. We have have shown in the proof of Lemma \ref{krull} that $M(H)$ is a free $M(\Gamma)$-module of rank $[\Gamma:H].$ Therefore $M(\rho)$ is a free $M(\Gamma)$-module of rank $[\Gamma:H]r.$ Theorem \ref{MarksMason} tells us that $M(\textrm{Ind}_{H}^{\Gamma} \rho)$ is a free $M(\Gamma)$-module whose rank equals $\textrm{dim}(\textrm{Ind}_{H}^{\Gamma} \rho) = [\Gamma: H] \textrm{dim } \rho.$ Lemma \ref{induction} states that $M(\rho)$ and $M(\textrm{Ind}_{H}^{\Gamma} \rho)$ are isomorphic as $M(\Gamma)$-modules.  Thus $M(\rho)$ is a free $M(\Gamma)$-module whose rank equals $[\Gamma: H] \textrm{dim } \rho.$ Hence $[\Gamma:H]r = [\Gamma: H] \textrm{dim } \rho$ and $r = \textrm{dim } \rho.$

\end{proof} 

\bibliographystyle{plain}
\bibliography{CohenM}

\begin{thebibliography}{10}

\bibitem{polynomial}
Eiichi Bannai, Masao Koike, Akihiro Munemasa, and Jiro Sekiguchi.
\newblock Some results on modular forms---subgroups of the modular group whose
  ring of modular forms is a polynomial ring.
\newblock In {\em Groups and combinatorics---in memory of {M}ichio {S}uzuki},
  volume~32 of {\em Adv. Stud. Pure Math.}, pages 245--254. Math. Soc. Japan,
  Tokyo, 2001.

\bibitem{benson}
David~J. Benson.
\newblock {\em Polynomial invariants of finite groups}, volume 190.
\newblock Cambridge University Press, 1993.

\bibitem{structure}
Luca Candelori and Cameron Franc.
\newblock Vector bundles and modular forms for {Fuchsian} groups of genus zero.
\newblock {\em arXiv preprint arXiv:1704.01684}, 2017.

\bibitem{francfreemodule}
Luca Candelori and Cameron Franc.
\newblock Vector-valued modular forms and the modular orbifold of elliptic
  curves.
\newblock {\em International Journal of Number Theory}, 13(01):39--63, 2017.

\bibitem{franc2013fourier}
Cameron Franc and Geoffrey Mason.
\newblock Fourier coefficients of vector-valued modular forms of dimension 2.
\newblock {\em Canadian Mathematical Bulletin}, 57(3):485--494, 2014.

\bibitem{survey}
Cameron Franc and Geoffrey Mason.
\newblock Hypergeometric series, modular linear differential equations and
  vector-valued modular forms.
\newblock {\em The Ramanujan Journal}, 41(1-3):233--267, 2016.

\bibitem{gannon2014theory}
Terry Gannon.
\newblock The theory of vector-valued modular forms for the modular group.
\newblock In {\em Conformal Field Theory, Automorphic Forms and Related
  Topics}, pages 247--286. Springer, 2014.

\bibitem{thesis}
Richard Gottesman.
\newblock {\em The algebra and arithmetic of vector-valued modular forms on
  {$\Gamma_{0}(2)$}}.
\newblock PhD thesis, University of California, Santa Cruz, 2018.

\bibitem{gottesman}
Richard Gottesman.
\newblock The arithmetic of vector-valued modular forms on {$\Gamma_{0}(2)$}.
\newblock {\em arXiv preprint arXiv:1811.04452}, 2018.

\bibitem{marksfourier}
Christopher Marks.
\newblock Fourier coefficients of three-dimensional vector-valued modular
  forms.
\newblock {\em Commun. Number Theory Phys.}, 9(2):387--412, 2015.

\bibitem{marksmason}
Christopher Marks and Geoffrey Mason.
\newblock Structure of the module of vector-valued modular forms.
\newblock {\em Journal of the London Mathematical Society}, 82(1):32--48, 2010.

\bibitem{Mason2012}
Geoffrey Mason.
\newblock On the {Fourier} coefficients of $2$-dimensional vector-valued
  modular forms.
\newblock {\em Proceedings of the American Mathematical Society},
  140(6):1921--1930, 2012.

\end{thebibliography}

\end{document}